  \theoremstyle{plain}
    \newtheorem{thm}{Theorem}[section]
    \newtheorem{prop}[thm]{Proposition}
  \theoremstyle{definition}
  \theoremstyle{remark}
    \newtheorem*{rem}{Remark}
    \newtheorem*{eg*}{Example}
\newcommand{\sym}[1]{\mathfrak{S}_{#1}}
\newcommand{\Lie}{\operatorname{Lie}}
\newcommand{\func}{\operatorname}
\newcommand{\soc}{\func{soc}}
\newcommand{\up}[1]{\text{$\uparrow$}^{#1}}
\newcommand{\down}[1]{\text{$\downarrow$}_{#1}}
\newcommand{\GL}{\mathrm{GL}}
\begin{document}

\baselineskip=18pt

\parindent0pt

\date{December 2010}

\title[Non-projective part of Lie module]{The non-projective part of \\ the Lie module for the symmetric group}

\author{Karin Erdmann}
\address[K. Erdmann]{Mathematical Institute, 24--29 St Giles', Oxford, OX1 3LB, United Kingdom.}
\email{erdmann@maths.ox.ac.uk}

\author{Kai Meng Tan}
\address[K. M. Tan]{Department of Mathematics, National University of Singapore, Block S17, 10 Lower Kent Ridge Road, Singapore 119076.}
\email{tankm@nus.edu.sg}

\thanks{2010 {\em Mathematics Subject Classification}. 20C30, 20G43.}
\thanks{Supported by EPSRC grant EP/G025487/1 and Singapore MOE Academic Research Fund R-146-000-135-112.}
\thanks{Most of the work appearing here was done during the second author's visit to the Mathematical Institute, Oxford, UK, in May 2010.  He thanks the first author for her invitation and the Institute for its hospitality.}

\begin{abstract}
The Lie module of the group algebra $F\sym{n}$ of the
 symmetric group  is known to be not projective if and only
if the characteristic $p$ of $F$ divides $n$.
We show that in this case its  non-projective summands belong
to the principal block of $F\sym{n}$.

Let $V$ be a vector space of dimension $m$ over $F$, and let $L^n(V)$
be the $n$-th homogeneous part of the free Lie algebra on $V$; this is a polynomial representation of $\GL_m(F)$ of degree $n$, or equivalently, a module of the Schur algebra $S(m,n)$.
Our result implies that, when $m \geq n$, every summand of $L^n(V)$ which
is not a tilting module belongs to the
principal block of $S(m,n)$, by which
we mean the block containing the $n$-th symmetric power of $V$.
\end{abstract}

\maketitle

\section{Introduction}

The Lie module of the symmetric group $\sym{n}$ appears in many contexts; in particular it is
closely related to the free Lie algebra.
It may be defined as
the right ideal of the group algebra $F\sym{n}$, generated by the
`Dynkin-Specht-Wever element'
$$\omega_n:= (1-c_n)(1-c_{n-1})\dotsm (1-c_2)$$
where $c_k$ is the $k$-cycle $(1 \ k \ k-1 \ \dotsc \ 2)$.  We write
$\Lie(n) = \omega_nF\sym{n}$ for this module.

One motivation comes from the work of Selick and Wu \cite{SW}. They reduce
the problem of finding  natural homotopy decompositions of the loop suspension of a $p$-torsion suspension to an algebraic question, and
in this context it is important to know a maximal projective submodule
of $\Lie(n)$ when the field has characteristic $p$.
The Lie module also occurs naturally as homology of configuration spaces, and
in other contexts.
Moreover the representation theory of symmetric groups
over prime characteristic is difficult and many basic questions are
open; naturally occurring representations are therefore of interest and may give new understanding.

We recall that $\Lie(n)$ has dimension $(n-1)!$ and its restriction to
$\sym{n-1}$ is free
of rank 1 (see for example \cite{W}, or the discussion in \cite{SW2}).
It is well-known that
$\omega_n^2 = n\omega_n$ (for example it follows from Theorem 5.16 of \cite{MKS}),
 so if $n$ is non-zero in $F$ then $\Lie(n)$ is a direct
summand of the group algebra and hence is projective.
We are interested in this module when $F$ has prime characteristic $p$ and when
$p$ divides $n$. In this case, $\Lie(n)$ always has non-projective summands;
for example it is not projective on restriction to a Sylow subgroup
(which one can see from its dimension). Its
module structure is  not well-understood in general, and the main problem is
to understand the non-projective summands.

Any $F\sym{n}$-module is a direct sum of block components.
Here we show that any non-projective summand of the Lie module belongs
to the principal block.
The Lie module $\Lie(n)$ is the image under the Schur functor of
the $n$-th homogeneous part $L^n(V)$ of the free Lie algebra on $V$ where
$V$ is the natural $m$-dimensional module of $\GL_m(F)$ with $m \geq n$.
Our result implies that any summand of $L^n(V)$ which is not a
summand of $V^{\otimes n}$ belongs to the principal block of the Schur algebra, that is
the block containing the $n$-th symmetric power of $V$.

The organisation of the paper is as follows:  we provide the relevant background in the next section, and prove our results in section \ref{S:main}.

\section{Preliminaries}

In this section, we provide the relevant background information.
Throughout, we fix a field $F$ of prime characteristic $p$, and we assume
it to be infinite when we work with the general linear group, or the Schur
algebra.

\subsection{Blocks}
Let $A$ be a finite-dimensional algebra over $F$.  Then $A$ can be uniquely decomposed into a direct sum of indecomposable two-sided ideals, which are called {\em blocks}.
Each block $B$ of $A$ is associated to a unique primitive central idempotent $e_B$ of $A$.  Let $M$ be a nonzero right $A$-module $M$.  We say that $M$ belongs to $B$, or equivalently, $B$ contains $M$, if and only if $Me_B = M$.
Any $A$-module is a direct sum of block components.

Let $G$ be a finite group.
To each block $B$ of the group algebra $FG$, we associate a $p$-subgroup $D$
of $G$, unique up to conjugation, called the {\em defect group} of $B$,
which may be defined as a minimal $p$-subgroup $Q$ of $G$ such that every right $FG$-module $M$ belonging to $B$ is a direct summand of $(M \down{Q}) \up{G}$.

The block containing the trivial $FG$-module $F$ is called the {\em principal block} of $FG$.

\subsection{Symmetric groups}

Let $\sym{n}$ be the symmetric group on $n$ letters.  The blocks of
$F\sym{n}$ are parametrised by the $p$-cores of partitions of $n$.  The Specht module $S^{\lambda}$, where $\lambda$ is a partition of $n$, belongs to the block $B$ labelled by the $p$-core $\kappa$ of $\lambda$, which is the partition
 obtained by removing as many rim $p$-hooks from the Young diagram of $\lambda$
as possible.  If 
$\kappa$ is a partition of $k$, and the defect group of this block can be taken to be a Sylow $p$-subgroup of $\sym{n-k}$ (see \cite{JK}).

Since the Specht module $S^{(n)}$ is isomorphic to the trivial $F\sym{n}$-module $F$, we see that the principal block of $F\sym{n}$ is parametrised by the $p$-core partition $(r)$ where $r$ is the remainder of $n$ when divided by $p$.

\subsection{Schur algebras}

Let $m,n \in \mathbb{Z}^+$.  The set $\Lambda(m,n)$ of the partitions of $n$ with at most $m$ non-zero parts parametrise the simple modules of the Schur algebra $S(m,n)$.  For $\lambda \in \Lambda(m,n)$, the simple module $L(\lambda)$
has highest weight $\lambda$.  For $\lambda \in \Lambda(m,n)$, there is also a distinguished $S(m,n)$-module, called the tilting module $T(\lambda)$.  This is an indecomposable and self-dual module; it has highest weight $\lambda$ and
has $L(\lambda)$ as a composition factor.  If $\lambda$ is $p$-regular and $m\geq n$ then $T(\lambda)$ has a simple head and socle, and is both injective and projective.

When $m \geq n$, the representation theory of $S(m,n)$ and of $F\sym{n}$ are related via the Schur functor $f : \text{\textbf{mod}-}S(m,n) \to \text{\textbf{mod}-}F\sym{n}$.  This is an exact functor, which preserves direct sums,
and sends a simple $S(m,n)$-module to either a simple $F\sym{n}$-module or to zero.
Furthermore, when $T$ is an injective (and projective) tilting module,  $f(T)$ is an injective (and projective) indecomposable $F\sym{n}$-module, and $f(\soc(T)) = \soc(f(T))$.  In particular, the head (and socle) of $T$ remains nonzero under $f$.

The $p$-cores of partitions in $\Lambda(m,n)$ also play an important role in
the parametrization of the blocks of $S(m,n)$ (see \cite{D}).
When $m\geq n$, such a $p$-core partition $\kappa$ labels a unique block of $S(m,n)$. It contains all
simple modules and also all tilting modules whose labels have $p$-core
$\kappa$.  The Schur functor respect the blocks:  it sends modules belonging to the block of $S(m,n)$ labelled by $\kappa$ to modules belonging to the block of $F\sym{n}$ labelled by $\kappa$.

When $m<n$, these $p$-cores do not necessarily label the blocks of $S(m,n)$, but
the following is still true: $L(\lambda)$ and $L(\mu)$ (or equivalently, $T(\lambda)$ and
$T(\mu)$) belong to the same block of $S(m,n)$ only if $\lambda$ and $\mu$ have the same
$p$-core. Therefore the blocks of $S(m,n)$ can be grouped into disjoint classes
where each class is labelled by a $p$-core partition.

In this paper, we define the `principal block' of $S(m,n)$ to be  the block, or
the class of blocks, labelled by the $p$-core partition
$(r)$ where $r$ is the remainder of $n$ when divided by $p$.

\subsection{Modular Lie powers and the Lie module}

Let $V$ be an  $m$-dimensional vector space over $F$, which we view as
the natural right $F\GL_m(F)$-module.  This induces a $\GL_m(F)$-action on $V^{\otimes n}$.  Denote by $L^n(V)$ the $n$-th homogeneous part of the free Lie algebra generated by $V$ (we take the free Lie algebra to be the
Lie subalgebra generated by $V$ of the tensor algebra on $V$).  Then $L^n(V)$ is
 a submodule of $V^{\otimes n}$.  As both $V^{\otimes n}$ and $L^n(V)$ are polynomial representations of $\GL_m(F)$ of degree $n$, they are thus $S(m,n)$-modules (see \cite{G}).

The indecomposable summands of $V^{\otimes n}$ are precisely the tilting modules $T(\lambda)$ for $p$-regular $\lambda \in \Lambda(m,n)$.
When $m\geq n$, the tensor space $V^{\otimes n}$ is therefore
both injective and projective as an $S(m,n)$-module.  It is known that $L^n(V)$ is a direct summand of $V^{\otimes n}$ if  $p \nmid n$.  When $m \geq n$, the Schur functor sends
$V^{\otimes r}$ and $L^n(V)$  to $F\sym{n}$ and $\Lie(n)$ respectively.

\section{Main results} \label{S:main}



\begin{thm} \label{T:sym}
Any non-projective indecomposable summand of $\Lie(n)$ belongs to the principal block of $F\sym{n}$.
\end{thm}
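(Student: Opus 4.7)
The plan is to exploit the two key facts made available in the preliminaries: (i) the restriction $\Lie(n)\down{\sym{n-1}}$ is free, and (ii) a non-principal block $B$ of $F\sym{n}$ has defect group that may be conjugated into $\sym{n-1}$.

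First I would dispose of the easy case: if $p\nmid n$, then $\Lie(n)$ is already projective, so the statement holds vacuously. So assume $p\mid n$. In this case $n\equiv 0\pmod p$, and by the description in subsection~2.2 the principal block of $F\sym{n}$ is the one labelled by the empty $p$-core.

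Now let $M$ be an indecomposable summand of $\Lie(n)$ belonging to a non-principal block $B$, labelled by a $p$-core $\kappa$ of some $k\ge 1$. The paper recalls that one may take the defect group $D$ of $B$ to be a Sylow $p$-subgroup of $\sym{n-k}$. Because $k\ge 1$, we have $n-k\le n-1$, so $D$ may be chosen to lie inside the standard copy of $\sym{n-1}$ inside $\sym{n}$. Since $\Lie(n)\down{\sym{n-1}}$ is free, so is its summand $M\down{\sym{n-1}}$; restricting further to $D\le\sym{n-1}$ preserves projectivity, hence $M\down{D}$ is a projective $FD$-module. The defect-group property then says that $M$ is a direct summand of $(M\down{D})\up{\sym{n}}$, and this induced module is projective because $M\down{D}$ is. Therefore $M$ itself is projective, contrary to our choice of $M$ in a non-principal block. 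This forces all non-projective summands of $\Lie(n)$ to live in the principal block.

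The argument is essentially a two-line combination of (i) and (ii), so I do not anticipate any genuine obstacle: the only thing to be careful about is the bookkeeping that $|\kappa|\ge 1$ in a non-principal block when $p\mid n$, which is exactly the point at which the hypothesis that $\Lie(n)$ is non-projective enters. Everything else is a routine application of the fact that projectivity is preserved under restriction to subgroups and under induction, together with the defect-group characterisation already spelt out in subsection~2.1.
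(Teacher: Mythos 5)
Your proposal is correct and is essentially the paper's own argument, just run in the contrapositive direction: the paper takes a non-projective summand, uses relative $Q$-projectivity and the projectivity of $\Lie(n)\down{\sym{n-1}}$ to force the defect group outside $\sym{n-1}$ and hence $\kappa=\emptyset$, while you start from a non-principal block and conclude projectivity of the summand. The ingredients (defect-group characterisation, Sylow $p$-subgroup of $\sym{n-k}$, freeness of the restriction to $\sym{n-1}$) and their use coincide, so there is nothing to add.
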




\begin{proof}
Let $M$ be a non-projective indecomposable summand of $\Lie(n)$.  Let $B$ be the block containing $M$, and let $Q$ be the defect group of $B$.
The module $M$ is a direct summand of $(M \down{Q}) \up{\sym{n}}$.  Thus, since $M$ is non-projective, so is $M \down{Q}$.
On the other hand, $\Lie(n)\down{\sym{n-1}}$ is projective
(see for example \cite{W}, or the discussion in \cite{SW2}),
and thus so is $M \down{\sym{n-1}}$.
Therefore $Q$ cannot be a subgroup of $\sym{n-1}$.

Let $\kappa$ be the $p$-core partition labelling $B$, and suppose that $\kappa$ is a partition of $k$. Then $Q$ can be taken to be a Sylow $p$-subgroup of $\sym{n-k}$.  Since $Q$ is not a subgroup of $\sym{n-1}$, we must have $k=0$, and hence $\kappa = \emptyset$.  This proves that $B$ is the principal block.
\end{proof}

\begin{thm} \label{T:Schur}
Let $V$ be the natural right $F\GL_m(F)$-module. Then any indecomposable summand of $L^n(V)$, which is not of the form $T(\lambda)$ for some $p$-regular
$\lambda \in \Lambda(m,n)$, belongs to the principal block of $S(m,n)$.
\end{thm}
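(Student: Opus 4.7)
The plan is to lift Theorem \ref{T:sym} through the Schur functor $f$ in the case $m \geq n$, and to reduce the case $m < n$ to $m \geq n$ by means of Donkin's truncation functor between Schur algebras.

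Assume first $m \geq n$. The key structural input is that $L^n(V)$ is generated as an $S(m,n)$-module by its $(1^n)$-weight space $\Lie(n)$: since $m \geq n$, any iterated Lie bracket in $v_{i_1}, \ldots, v_{i_n}$ (where $v_1, \ldots, v_m$ is a basis of $V$) arises from a bracket in distinct basis vectors (which lies in weight $(1^n)$) by a Schur algebra element implementing the appropriate substitution. In particular, every nonzero indecomposable summand $U$ of $L^n(V)$ is itself generated by its $(1^n)$-weight, so $f(U) \neq 0$.

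Now let $U$ be an indecomposable summand of $L^n(V)$ that is not of the form $T(\lambda)$ for any $p$-regular $\lambda$, and suppose for contradiction that $U$ lies in a non-principal block $B$ of $S(m,n)$. Since $f$ respects the $p$-core block labelling and $f(U)$ is a nonzero indecomposable summand of $\Lie(n)$ belonging to the non-principal block of $F\sym{n}$ sharing the $p$-core of $B$, Theorem \ref{T:sym} forces $f(U)$ to be projective, so $f(U) \cong P(D^\lambda)$ for some $p$-regular $\lambda$. I would then identify $U \cong T(\lambda)$ via the adjunction between $f$ and its left adjoint: generation of $U$ by its $(1^n)$-weight yields a surjection from the tilting lift $T(\lambda)$ onto $U$, whose kernel has zero $(1^n)$-weight but is a submodule of $T(\lambda)$; since $\lambda$ is $p$-regular, the simple socle $L(\lambda)$ of $T(\lambda)$ has nonzero $(1^n)$-weight, so $T(\lambda)$ is cogenerated by its $(1^n)$-weight and the kernel must vanish. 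This contradicts the hypothesis on $U$.

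For $m < n$, fix $m' \geq n$ and apply Donkin's truncation functor $d : \text{\textbf{mod}-}S(m', n) \to \text{\textbf{mod}-}S(m, n)$ defined via the idempotent projecting onto $\Lambda(m, n)$-weight spaces. This functor is exact, preserves direct sums and indecomposability off its kernel, respects the $p$-core block labelling, sends $L^n(V_{m'})$ to $L^n(V_m)$, and sends $T(\lambda)$ to $T(\lambda)$ if $\lambda \in \Lambda(m, n)$ and to $0$ otherwise. Hence the conclusion of Theorem \ref{T:Schur} for $L^n(V_{m'})$ transfers to $L^n(V_m)$. The main obstacle, I expect, is the identification $U \cong T(\lambda)$ in the $m \geq n$ argument: making rigorous the Schur-functor adjunction and its interaction with the generation/cogeneration properties, so that $U$ is uniquely pinned down by $f(U)$ within the class of $S(m,n)$-modules generated by their $(1^n)$-weight.
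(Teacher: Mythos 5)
Your argument is correct in substance, but in the main case $m\geq n$ it takes a genuinely different route from the paper. The paper uses only the embedding $L^n(V)\subseteq V^{\otimes n}$: if $N$ is a summand in a non-principal block, Theorem \ref{T:sym} makes $f(N)$ injective, and Proposition \ref{P} (pass to the injective hull of $N$ inside a direct sum of injective tilting modules, then kill the cokernel using exactness of $f$ and the fact that head constituents of injective tilting modules survive $f$) forces $N$ itself to be a sum of injective tilting modules. You instead use the dual-flavoured input that $L^n(V)$ is generated by its $(1^n)$-weight space --- true for $m\geq n$, since left-normed brackets span $L^n(V)$ and each is the image of a multilinear bracket under the endomorphism of $V$ performing the substitution, which acts through $S(m,n)$ --- so each indecomposable summand $U$ is generated by $f(U)$. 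Then the counit $f(U)\otimes_{eSe}eS\to U$ is surjective; since $f(U)$ is projective over $eSe\cong F\sym{n}$, its source is a direct summand of copies of $eS\cong V^{\otimes n}$, hence a direct sum of injective tilting modules, and the kernel vanishes because $f$ kills it while socles of injective tilting modules survive $f$. This is the rigorous form of your ``tilting lift'', and it works in every characteristic, so the obstacle you flag is genuinely surmountable. The paper's version buys economy (no generation statement about $L^n(V)$ is needed, and Proposition \ref{P} is a clean self-contained lemma about submodules); yours makes explicit the extra structural fact that $L^n(V)$, and hence every summand of it, is generated by its multilinear part.

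Three small repairs are needed. First, $f(U)$ need not be indecomposable, so you should lift a direct sum of projective indecomposables to a direct sum of tilting modules rather than a single $T(\lambda)$; this is harmless since $U$ is indecomposable. Second, the socle of $T(\lambda)$ is in general not $L(\lambda)$ (for $p=2$, $m=n=2$ one has $\soc T(2)=L(1,1)$), and the labelling $f(T(\lambda))\cong P(D^{\lambda})$ is not on the nose; what your kernel step actually requires is the fact quoted in the paper's preliminaries, namely $f(\soc T)=\soc(f(T))\neq 0$ for an injective tilting module $T$, applied to each simple summand of the socle of the lift. Third, in the $m<n$ step, the truncation functor does not preserve indecomposability of arbitrary modules, but this claim is not needed: as in the paper, only its exactness, its compatibility with the $p$-core labelling, and its effect $T(\lambda)\mapsto T(\lambda)$ or $0$ on tilting modules are used.
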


This follows from Theorem \ref{T:sym} and the following proposition.

\begin{prop} \label{P}
Let $m,n \in \mathbb{Z}^+$ with $m \geq n$.  Let $M$ be a finite direct sum of injective tilting modules of $S(m,n)$. Let $N$ be a submodule of $M$ such that $f(N)$ is injective.  Then $N$ is a direct sum of injective tilting modules of $S(m,n)$.
\end{prop}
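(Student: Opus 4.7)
The plan is to show $N$ is itself injective as an $S(m,n)$-module: once this is done, $N \hookrightarrow M$ splits, and Krull--Schmidt applied to the decomposition $M = \bigoplus_i T(\lambda_i)$ forces $N$ to be a direct sum of (some sub-multiset of) the injective tilting modules $T(\lambda_i)$. To set this up, let $E$ be an injective envelope of $N$ realised inside $M$ (which exists since $M$ is injective). Being injective, $E$ is itself a direct summand of $M$, so by Krull--Schmidt $E = \bigoplus_k T(\nu_k)$ for some $p$-regular partitions $\nu_k$, and $\soc(E) = \soc(N)$ by construction. It therefore suffices to prove $N = E$.

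The first sub-step is $f(N) = f(E)$. Each $T(\nu_k)$ has simple socle $L(\nu_k)$ with $\nu_k$ $p$-regular, and by the preliminaries $f(\soc T(\nu_k)) = \soc(f(T(\nu_k)))$; taking direct sums gives $f(\soc E) = \soc(f(E))$. Combining this with $\soc(N) = \soc(E)$, the general inclusion $f(\soc N) \subseteq \soc(f(N))$, and $\soc(f(N)) \subseteq \soc(f(E))$ (from $f(N) \subseteq f(E)$), one obtains $\soc(f(N)) = \soc(f(E))$. Since $f(N)$ is injective and contained in the injective module $f(E)$, the inclusion $f(N) \hookrightarrow f(E)$ splits; its complement has zero socle and therefore vanishes, giving $f(N) = f(E)$.

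The final sub-step deduces $N = E$. Exactness of $f$ applied to $0 \to N \to E \to E/N \to 0$, together with $f(N) = f(E)$, yields $f(E/N) = 0$, so every composition factor of $E/N$ is of the form $L(\mu)$ with $\mu$ $p$-singular. On the other hand, $E = \bigoplus_k T(\nu_k)$ has head $\bigoplus_k L(\nu_k)$, so every simple quotient of $E$ --- and in particular every simple quotient of $E/N$ --- carries a $p$-regular label. A nonzero finite-dimensional module has a nonzero head, so if $E/N \neq 0$, then its head would contain a simple which is simultaneously $p$-regular (as a quotient of $E$) and $p$-singular (as a composition factor of $E/N$), a contradiction. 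Thus $N = E$. The substantive step is this last one: the hypothesis ``$f(N)$ injective'' yields $f(E/N) = 0$, but to conclude $E/N = 0$ one must play the $p$-singular composition factors of $E/N$ off against the $p$-regular simple heads of the tilting summands of $E$.
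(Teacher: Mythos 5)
Your overall route is exactly the paper's: embed $N$ in its injective hull $E=I(N)$ inside $M$, compare socles to get $f(N)=f(E)$, use exactness of $f$ to get $f(E/N)=0$, and then rule out $E/N\neq 0$ by looking at the head of $E$. The first two steps are fine. The problem is your justification of the last step (which you yourself single out as the substantive one): it rests on two misidentifications. First, the head and socle of an injective tilting module $T(\nu)$ are \emph{not} $L(\nu)$ in general; since the weight $\nu$ occurs in $T(\nu)$ with multiplicity one, having head or socle $L(\nu)$ would force $T(\nu)=L(\nu)$. For instance, for $p=2$ and $m=n=2$, $T(2)=V^{\otimes 2}$ is uniserial with composition series $L(1,1)\,|\,L(2)\,|\,L(1,1)$, so its head and socle are $L(1,1)$, not $L(2)$. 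Second, the simple modules killed by the Schur functor are not those with $p$-singular labels: $f(L(\mu))\neq 0$ if and only if $\mu$ is $p$-restricted. In the same example $f(L(2))=0$ although $(2)$ is $2$-regular, while $f(L(1,1))\neq 0$ although $(1,1)$ is $2$-singular. Consequently the intended contradiction ``composition factors of $E/N$ are $p$-singular, simple quotients of $E$ are $p$-regular'' is not valid as stated; in the example the labels come out exactly the other way around.

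The gap is repairable, and the repair is precisely what the paper uses and what its preliminaries record: for an injective tilting module $T$ one has $f(\soc(T))=\soc(f(T))$ and, in particular, the (simple) head and socle of $T$ remain nonzero under $f$ --- no identification of their labels is needed. Every simple quotient of $E/N$ is a simple quotient of $E=\bigoplus_k T(\nu_k)$, hence survives $f$; on the other hand $f(E/N)=0$ and exactness of $f$ force $f$ to kill every quotient of $E/N$. Hence $E/N$ has zero head and so $E/N=0$. With this substitution your argument coincides with the paper's proof; note also that your identification $\soc(T(\nu_k))=L(\nu_k)$ in the first sub-step is equally unnecessary, since only $f(\soc(T))=\soc(f(T))$ is used there.
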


\begin{proof}
Let $I(N)$ be the injective hull of $N$.  Then $\soc(N) = \soc(I(N))$.  Since $N \subseteq M$, and $M$ is a direct sum of injective tilting modules, we see that $I(N)$ is a direct sum of injective tilting modules, so that $f(\soc(I(N))) = \soc(f(I(N)))$.  Thus,
\begin{equation*}
f(\soc(I(N))) = f(\soc(N)) \subseteq \soc(f(N)) \subseteq \soc(f(I(N))) = f(\soc(I(N))),
\end{equation*}
so that we must have equality throughout.  This yields in particular $\soc(f(N)) =\soc(f(I(N)))$, so that $f(N) = f(I(N))$ since both $f(N)$ and $f(I(N))$ are injective.  As $f$ is exact, we have $f(I(N)/N) \cong f(I(N))/ f(N) = 0$.  Since all the composition factors of the head of $I(N)$ remain nonzero under the Schur functor $f$, we must have $N= I(N)$.
\end{proof}

\begin{proof}[Proof of Theorem \ref{T:Schur}]
Suppose first that $m\geq n$.
Let $N$ be an indecomposable summand of $L^n(V)$ belonging to a non-principal
block of $S(m,n)$.  Then since the Schur functor $f$ preserves direct sums
and respects blocks, $f(N)$ is a summand of $\Lie(n)$
belonging to a non-principal block of $\sym{n}$,
so that by Theorem \ref{T:sym}, $f(N)$ is projective, and hence also is injective.
Since $N$ is a submodule of $V^{\otimes n}$, and $V^{\otimes n}$ is a direct sum of injective tilting modules, it follows from by Proposition \ref{P} that $N$ is a direct sum of injective tilting modules.

Now assume that $m<n$, and let $W$ be the natural $n$-dimensional right $F\GL_n(F)$-module. The truncation functor $d_{n,m}: \text{\textbf{ mod}-}S(n,n) \to \text{\textbf{mod}-}S(m,n)$ (see \cite[\S6.5]{G}) is exact, preserves direct sums, and sends $W$, $W^{\otimes n}$ and $L^n(W)$ to $V$, $V^{\otimes n}$ and $L^n(V)$ respectively. In addition, for a partition $\lambda$ of $n$, it sends $L(\lambda)$ and $T(\lambda)$ of $S(n,n)$ to $L(\lambda)$ and $T(\lambda)$ of $S(m,n)$ if $\lambda$ has at most $m$ non-zero parts, and to zero otherwise (see \cite[\S6.6]{G} and \cite[\S1.7 Proposition]{E}).  In particular, each summand of $L^n(W)$ belonging to the principal block is
sent by $d_{n,m}$ to a summand of $L^n(V)$ belonging to the principal `block' of $S(m,n)$. As each of the summands of $L^n(W)$ belonging to a non-principal block of $S(n,n)$ is a tilting module, it is either sent to zero,
or sent to the tilting module with the same label. This shows that each indecomposable summand of $L^n(V)$ belonging to a non-principal `block' of $S(m,n)$
is a tilting module, labelled by a $p$-regular partition of $n$.
\end{proof}

\end{document}